\title{On the possible quantities of Fibonacci numbers that occur in some type of intervals}
\author{\sc Bakir FARHI \\
Laboratoire de Mathématiques appliquées \\
Faculté des Sciences Exactes \\
Université de Bejaia, 06000 Bejaia, Algeria \\[1mm]
\href{mailto:bakir.farhi@gmail.com}{bakir.farhi@gmail.com} \\[1mm]
\url{http://www.bakir-farhi.net}
}
\date{}
\def\N{{\mathbb N}}
\def\Z{{\mathbb Z}}
\def\F{\mathscr{F}}
\newcommand{\card}[1]{\mathrm{Card}\,#1}
\theoremstyle{plain}
\numberwithin{equation}{section}
\newtheorem{thm}{Theorem}[section]
\newtheorem{lemma}[thm]{Lemma}
\begin{document}
\maketitle
\begin{abstract}
In this paper, we show that for any integer $a \geq 2$, each of the intervals $[a^k , a^{k + 1})$ ($k \in \N$) contains either $\left\lfloor \frac{\log a}{\log\Phi}\right\rfloor$ or $\left\lceil \frac{\log a}{\log\Phi}\right\rceil$ Fibonacci numbers. In addition, the density (in $\N$) of the set of the all natural numbers $k$ for which the interval $[a^k , a^{k + 1})$ contains exactly $\left\lfloor \frac{\log a}{\log\Phi}\right\rfloor$ Fibonacci numbers is equal to $\left(1 - \left\langle \frac{\log a}{\log\Phi}\right\rangle\right)$ and the density of the set of the all natural numbers $k$ for which the interval $[a^k , a^{k + 1})$ contains exactly $\left\lceil \frac{\log a}{\log\Phi}\right\rceil$ Fibonacci numbers is equal to $\left\langle \frac{\log a}{\log\Phi}\right\rangle$. 
\end{abstract}
\noindent\textbf{MSC 2010:} 11B39, 11B05. \\
\textbf{Keywords:} Fibonacci numbers.

\section{Introduction and the main result}
Throughout this paper, if $x$ is a real number, we let $\lfloor x\rfloor$, $\lceil x\rceil$ and $\langle x\rangle$ respectively denote the greatest integer $\leq x$, the least integer $\geq x$ and the fractional part of $x$. Furthermore, we let $\card{X}$ denote the cardinal of a given finite set $X$. Finally, for any subset $A$ of $\N$, we define the \emph{density} of $A$ as the following limit (if it exists):
$$
\mathbf{d}(A) := \lim_{N \rightarrow + \infty} \frac{\card{(A \cap [1, N])}}{N} .
$$ 
It is clear that if $\mathbf{d}(A)$ exists then $\mathbf{d}(A) \in [0 , 1]$.

The Fibonacci sequence ${(F_n)}_{n \in \N}$ is defined by: $F_0 = 0$, $F_1 = 1$ and for all $n \in \N$:
\begin{equation}\label{def f_n}
F_{n + 2} = F_n + F_{n + 1}
\end{equation}
A Fibonacci number is simply a term of the Fibonacci sequence. In this paper, we denote by $\F$ the set of the all Fibonacci numbers; that is
$$
\F := \{F_n ~,~ n \in \N\} = \{0 , 1 , 2 , 3 , 5 , 8 , 13 , 21 , 34 , 55 , 89 , 144 , \dots\} .
$$
First, let us recall some important identities that will be useful in our proofs in Section \ref{sec2}. The Fibonacci sequence can be extended to the negative index $n$ by rewriting the recurrence relation \eqref{def f_n} as $F_n = F_{n + 2} - F_{n + 1}$. By induction, we easily show that for all $n \in \Z$, we have:
\begin{equation}\label{extend f_n}
F_{- n} = (-1)^{n + 1} F_n
\end{equation}
(see \cite[Chapter 5]{kosh} for the details). A closed formula of $F_n$ ($n \in \Z$) in terms of $n$ is known and it is given by:
\begin{equation}\label{binet}
F_n = \frac{1}{\sqrt{5}} \left(\Phi^n - \overline{\Phi}^n\right)
\end{equation}
where $\Phi := \frac{1 + \sqrt{5}}{2}$ is the golden ration and $\overline{\Phi} := \frac{1 - \sqrt{5}}{2} = - \frac{1}{\Phi}$. Formula \eqref{binet} is called ``the Binet Formula'' and there are many ways to prove it (see e.g., \cite[Chapter 8]{hons} or \cite[Chapter 5]{kosh}). Note that the real numbers $\Phi$ and $\overline{\Phi}$ are the roots of the quadratic equation:
$$
x^2 = x + 1 .
$$ 
More generally, we can show by induction (see e.g., \cite[Chapter 8]{hons}) that for $x \in \{\Phi , \overline{\Phi}\}$ and for all $n \in \Z$, we have:
\begin{equation}\label{x^n in function of x and f_n}
x^n = F_n x + F_{n - 1} 
\end{equation}
As remarked by Hosberger in \cite[Chapter 8]{hons}, Binet's formula \eqref{binet} immediately follows from the last formula \eqref{x^n in function of x and f_n}. On the other hand, the Fibonacci sequence satisfies the following important formula:
\begin{equation}\label{addition formula}
F_{n + m} = F_n F_{m + 1} + F_{n - 1} F_m \hspace*{2cm} (\forall n , m \in \Z)
\end{equation} 
which we call ``the addition formula''. A nice and easily proof of \eqref{addition formula} uses the formula \eqref{x^n in function of x and f_n}. We can also prove \eqref{addition formula} by using matrix calculations as in \cite[Chapter 8]{hons}. \\
As usual, we associate to the Fibonacci sequence ${(F_n)}_{n \in \Z}$ the Lucas sequence ${(L_n)}_{n \in \Z}$, defined by: $L_0 = 2$, $L_1 = 1$ and for all $n \in \Z$:
\begin{equation}\label{def l_n}
L_{n + 2} = L_n + L_{n + 1}
\end{equation}
There are many connections and likenesses between the Fibonacci sequence and the Lucas sequence. For example, we have the two following formulas (see \cite[Chapter 8]{hons} or \cite[Chapter 5]{kosh}):
\begin{eqnarray}
L_n & = & F_{n - 1} + F_{n + 1} \label{l_n in terms of f_n} \\
L_n & = & \Phi^n + \overline{\Phi}^n \label{closed formula for l_n}
\end{eqnarray}
which hold for any $n \in \Z$. For many other connections between the Fibonacci and the Lucas numbers, the reader can consult the two references cited just above.

Fibonacci's sequence plays a very important role in theoretical and applied mathematics. During the two last centuries, arithmetic, algebraic and analytic properties of the Fibonacci sequence have been investigated by several authors. One of those properties concerns the occurrence of the Fibonacci numbers in some type of intervals. For example, the French mathematician Gabriel Lam\'e (1795-1870) proved that there must be either four or five Fibonacci numbers with the same number of digits (see \cite[page 29]{posa}). A generalization of this result consists to determine the possible quantities of Fibonacci numbers that belong to an interval of the form $[a^k , a^{k + 1})$, where $a$ and $k$ are positive integers. In this direction, Honsberger \cite[Chapter 8]{hons} proved the following:\\[2mm]
\textbf{Theorem (Honsberger \cite{hons}).} \emph{%
Let $a$ and $k$ be any two positive integers. Then between the consecutive powers $a^k$ and $a^{k + 1}$ there can never occur more than $a$ Fibonacci numbers. 
}\\[2mm]
However, Honsberger's theorem gives only a upper bound for the quantity of the Fibonacci numbers in question. Furthermore, it is not optimal, because for $a = 10$, it gives a result that is weaker than Lam\'e's one. In this paper, we obtain the optimal generalization of Lam\'e's result with precisions concerning some densities. Our main result is the following:
\begin{thm}\label{thm_final}
Let $a \geq 2$ be an integer. Then, any interval of the form $[a^k , a^{k + 1})$ ($k \in \N$) contains either $\left\lfloor \frac{\log a}{\log \Phi}\right\rfloor$ or $\left\lceil \frac{\log a}{\log \Phi}\right\rceil$ Fibonacci numbers. \\
In addition, the density (in $\N$) of the set of the all natural numbers $k$ for which the interval $[a^k , a^{k + 1})$ contains exactly $\left\lfloor \frac{\log a}{\log \Phi}\right\rfloor$ Fibonacci numbers is equal to $\left(1 - \left\langle \frac{\log a}{\log \Phi}\right\rangle\right)$ and the density of the set of the all natural numbers $k$ for which the interval $[a^k , a^{k + 1})$ contains exactly $\left\lceil \frac{\log a}{\log \Phi}\right\rceil$ Fibonacci numbers is equal to $\left\langle \frac{\log a}{\log \Phi}\right\rangle$.
\end{thm}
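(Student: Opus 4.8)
The plan is to reduce the count to the behaviour of the threshold indices $m_k := \min\{n \ge 2 : F_n \ge a^k\}$ ($k \ge 0$), to locate each $m_k$ exactly, and then to read off both assertions. Since $(F_n)_{n \ge 2}$ is strictly increasing, the set $\F \cap [a^k, a^{k+1})$ equals $\{F_n : m_k \le n \le m_{k+1} - 1\}$, so the number $N_k$ of Fibonacci numbers in $[a^k, a^{k+1})$ is $m_{k+1} - m_k$, with $m_0 = 2$. Put $\theta := \frac{\log a}{\log \Phi}$ and $c := \frac{\log 5}{2 \log \Phi}$; note that $\theta$ is irrational, since $\theta = p/q$ with $p,q$ positive integers would give $a^q = \Phi^p$, impossible because $\Phi^p = F_p \Phi + F_{p-1}$ is irrational for $p \ge 1$ by \eqref{x^n in function of x and f_n}, whereas $a^q$ is an integer $\ge 2$; likewise $\sqrt{5}\,a^k$ is never a power of $\Phi$.

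The core step is a precise localisation: $m_k = \lceil k\theta + c\rceil = \lceil \log_\Phi(\sqrt5\,a^k)\rceil$ for every $k \ge 0$, with the sole exception that if $a = F_j$ for some odd $j \ge 3$ then $m_1 = j = \lceil \theta + c\rceil - 1$. To prove it, set $n^\ast := \lceil \log_\Phi(\sqrt5\,a^k)\rceil$, so that $\Phi^{n^\ast - 1} < \sqrt5\,a^k < \Phi^{n^\ast}$, and use the consequences $\Phi^n = \frac12(L_n + \sqrt5\,F_n)$ and $L_n^2 - 5F_n^2 = 4(-1)^n$ of \eqref{binet} and \eqref{closed formula for l_n}. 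The inequality $\Phi^{n^\ast} > \sqrt5\,a^k$ rewrites as $L_{n^\ast} > \sqrt5(2a^k - F_{n^\ast})$; if one had $F_{n^\ast} \le a^k - 1$ this would give $L_{n^\ast}^2 > 5(a^k+1)^2$, contradicting $L_{n^\ast}^2 = 5F_{n^\ast}^2 + 4(-1)^{n^\ast} \le 5(a^k-1)^2 + 4$ (here $a \ge 2$ is used), hence $F_{n^\ast} \ge a^k$. Symmetrically, $\Phi^{n^\ast - 1} < \sqrt5\,a^k$ forces $L_{n^\ast - 1} < \sqrt5\,a^k$, so $L_{n^\ast - 1}^2 \le 5a^{2k}-1$; this is compatible with $L_{n^\ast-1}^2 = 5F_{n^\ast-1}^2 + 4(-1)^{n^\ast-1}$ and $F_{n^\ast-1} \ge a^k$ only if $n^\ast-1$ is odd and $L_{n^\ast-1}^2$ is an integer in $[5a^{2k}-4,\,5a^{2k}-1]$ congruent to $1 \bmod 5$, i.e. only if $L_{n^\ast-1}^2 = 5a^{2k}-4$, which gives $F_{n^\ast-1} = a^k$ — an odd-indexed Fibonacci number. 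Combining these, $m_k = n^\ast$ except when $a^k$ is an odd-indexed Fibonacci number with index $\ge 2$; since the only perfect powers in the Fibonacci sequence are $0, 1, 8 = F_6, 144 = F_{12}$ (Bugeaud--Mignotte--Siksek), this can happen only for $k = 1$ with $a \in \{F_3, F_5, F_7, \dots\}$, while at $k = 0$ one simply has $m_0 = 2 = \lceil c\rceil$ (the restriction $n \ge 2$ being in force).

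With the localisation established, the first assertion is immediate away from the exceptional index: $N_k = \lceil(k+1)\theta + c\rceil - \lceil k\theta + c\rceil$, and for irrational $\theta$ and any real $x$ the integer $\lceil x + \theta\rceil - \lceil x\rceil$ lies in the open interval $(\theta - 1, \theta + 1)$, hence equals $\lfloor\theta\rfloor$ or $\lceil\theta\rceil$; the remaining cases $k = 0, 1$ (when $a = F_j$, $j$ odd) are verified by hand, using $\Phi^{j-2} < F_j < \Phi^{j-1}$ to see that $\lfloor\theta\rfloor = j-2$, $\lceil\theta\rceil = j-1$, $N_0 = j-2$, and (with a similar estimate for $\sqrt5\,F_j^2$) $N_1 \in \{j-2, j-1\}$. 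For the densities, writing $\lceil x\rceil = x + \langle -x\rangle$ for $x \notin \Z$ turns the formula for $N_k$ into: for non-exceptional $k$, $N_k = \lceil\theta\rceil$ iff $\langle -(k\theta + c)\rangle < \langle\theta\rangle$, and $N_k = \lfloor\theta\rfloor$ otherwise. Since $\theta \notin \Q$, the sequence $\big(\langle -(k\theta + c)\rangle\big)_{k \ge 0}$ is equidistributed modulo $1$ by Weyl's theorem, so $\{k : N_k = \lceil\theta\rceil\}$ has density $\langle\theta\rangle$ and $\{k : N_k = \lfloor\theta\rfloor\}$ has density $1 - \langle\theta\rangle$, the finitely many exceptional $k$ being irrelevant to densities.

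The step I expect to be hardest is the localisation lemma — concretely, pinning $m_k$ down to the single value $\lceil k\theta + c\rceil$ instead of merely to within $\pm 1$. The crude estimate $F_n \approx \Phi^n/\sqrt5$ only confines $m_k$ to an interval of length slightly above $1$; it is the integrality-plus-congruence argument resting on $L_n^2 - 5F_n^2 = \pm 4$ that removes the ambiguity, at the cost of the one genuine exception (odd-indexed Fibonacci values of $a$) that must then be absorbed separately. Everything after the lemma — the ceiling inequality, the fractional-part rewriting, and the equidistribution input — is routine.
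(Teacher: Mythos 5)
Your proof is correct, but it takes a genuinely different route from the paper's. You localise the threshold index exactly --- $m_k=\min\{n\ge 2: F_n\ge a^k\}=\lceil \log_\Phi(\sqrt5\,a^k)\rceil$, up to the single exception $a^k=F_j$ with $j$ odd --- by exploiting the integrality relation $L_n^2-5F_n^2=4(-1)^n$, and you then read off the first assertion from the ceiling inequality $\lceil x+\theta\rceil-\lceil x\rceil\in\{\lfloor\theta\rfloor,\lceil\theta\rceil\}$ and the densities from Weyl equidistribution of $k\theta+c$ modulo $1$ (with $\theta=\frac{\log a}{\log\Phi}$, $c=\log_\Phi\sqrt5$). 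The paper never locates individual Fibonacci numbers: it bounds the ratios $F_{r+i-1}/F_r$ and $F_{r+i}/F_{r-1}$ of the extreme Fibonacci numbers in $[a^k,a^{k+1})$ between $\lfloor\Phi^m\rfloor$ and $\lceil\Phi^m\rceil$ (Lemma \ref{lemme clef}, proved from $F_{n+m}=F_nL_m+(-1)^{m+1}F_{n-m}$, with Lemma \ref{l2} securing the hypothesis $n\ge m-1$), and it obtains the densities by pure counting: $A_N+B_N=N$ together with $\ell A_N+(\ell+1)B_N=\card{(\F\cap[1,a^N))}\sim N\log a/\log\Phi$ (Lemma \ref{l4}), so no equidistribution is needed. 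Your route buys strictly more refined information --- it identifies exactly which $k$ receive $\lceil\theta\rceil$ Fibonacci numbers, via the rotation criterion $\langle-(k\theta+c)\rangle<\langle\theta\rangle$ --- but at a real cost in self-containedness: to exclude exceptional $k\ge2$ you invoke the Bugeaud--Mignotte--Siksek theorem on perfect powers among Fibonacci numbers, a very deep result for a statement the paper handles with nothing beyond Binet-type identities (the appeal is legitimate, so there is no gap, but an elementary substitute would require a separate local analysis of a hypothetical $a^k=F_j$, $j$ odd, $k\ge2$, analogous to your $k=1$ case), and the densities use Weyl's theorem instead of the paper's two-line linear system. Two places to tighten in a write-up: the implication ``$\Phi^{n^\ast-1}<\sqrt5\,a^k$ forces $L_{n^\ast-1}<\sqrt5\,a^k$'' is valid only under the standing hypothesis $F_{n^\ast-1}\ge a^k$ (which is how you use it, since $L_{n^\ast-1}=2\Phi^{n^\ast-1}-\sqrt5\,F_{n^\ast-1}$), so state that explicitly; and in the exceptional case $a=F_j$ it is worth recording $F_{2j-2}=F_j^2-F_{j-2}^2<F_j^2\le F_{2j-1}$, which pins down $m_2=2j-1$ and hence $N_1=j-1=\lceil\theta\rceil$, completing the hand verification you sketch.
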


\section{The proof of the main result}\label{sec2}
The proof of our main result needs the following lemmas:
\begin{lemma}\label{l1}
For all positive integer $n$, we have:
$$
\Phi^{n - 2} \leq F_n \leq \Phi^{n - 1} . 
$$
In addition, the left-hand side of this double inequality is strict whenever $n \geq 3$ and its right-hand side is strict whenever $n \geq 2$. 
\end{lemma}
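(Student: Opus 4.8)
The plan is to establish both inequalities simultaneously by induction on $n$, exploiting the two-term recurrence \eqref{def f_n} together with the identity $\Phi^2 = \Phi + 1$. Because the recurrence expresses $F_{n+2}$ in terms of $F_{n+1}$ and $F_n$, I would take $n = 1$ and $n = 2$ as base cases and then pass from the pair of indices $(n, n+1)$ to $n+2$. The two bounds $\Phi^{n-2} \le F_n$ and $F_n \le \Phi^{n-1}$ propagate through the recurrence in exactly the same way, with all inequalities reversed, so essentially one argument does both jobs.

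For the base cases: since $\Phi^{-1} = \Phi - 1 = \frac{\sqrt 5 - 1}{2} < 1$, we have $\Phi^{1-2} < F_1 = 1 = \Phi^{1-1}$; and since $\Phi^0 = 1$ and $\Phi^1 = \Phi > 1$, we have $\Phi^{2-2} = F_2 = 1 < \Phi^{2-1}$. Thus the double inequality holds for $n = 1$ and $n = 2$, the upper bound being strict at $n = 2$. For the induction step on the upper bound, assuming $F_n \le \Phi^{n-1}$ and $F_{n+1} \le \Phi^n$, the recurrence gives
\[
F_{n+2} = F_{n+1} + F_n \le \Phi^n + \Phi^{n-1} = \Phi^{n-1}(\Phi + 1) = \Phi^{n-1}\Phi^2 = \Phi^{(n+2)-1},
\]
and the step for the lower bound is identical with all inequalities reversed:
\[
F_{n+2} = F_{n+1} + F_n \ge \Phi^{n-1} + \Phi^{n-2} = \Phi^{n-2}(\Phi + 1) = \Phi^{n-2}\Phi^2 = \Phi^{(n+2)-2}.
\]

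It remains to track the strictness. The key observation is that if a strict inequality occurs in one of the two summands on the right-hand side of the recurrence, then it is inherited by $F_{n+2}$, and hence stays strict for every larger index. For the upper bound, strictness already holds at $n = 2$; combined with $F_1 \le \Phi^0$ this yields $F_3 = F_2 + F_1 < \Phi + 1 = \Phi^2$, after which strictness propagates to all $n \ge 2$. For the lower bound, a direct check gives $F_3 = 2 > \Phi = \Phi^{3-2}$; combined with $F_2 \ge \Phi^0$ this yields $F_4 = F_3 + F_2 > \Phi^2 = \Phi^{4-2}$, and strictness propagates to all $n \ge 3$. I expect no genuine obstacle here: the only points requiring a little care are the elementary estimate $\Phi^{-1} < 1$ needed for the $n = 1$ instance of the lower bound, and the bookkeeping that pins down exactly the ranges $n \ge 3$ and $n \ge 2$ on which the two inequalities become strict.
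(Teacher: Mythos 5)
Your proof is correct and follows essentially the same route as the paper: induction on $n$ with base cases $n=1,2$, adding the two inductive bounds and using $\Phi+1=\Phi^2$ to close the step. The only difference is that you carry out the strictness bookkeeping explicitly (the paper dismisses it with ``by the same way''), and your treatment of it is accurate.
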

\begin{proof}
We argue by induction on $n$. The double inequality of the lemma is clearly true for $n = 1$ and for $n = 2$. For a given integer $n \geq 3$, suppose that the double inequality of the lemma holds for any positive integer $m < n$. So it holds in particular for $m = n - 1$ and for $m = n - 2$, that is:
$$
\Phi^{n - 3} \leq F_{n - 1} \leq \Phi^{n - 2} ~~~~\text{and}~~~~ \Phi^{n - 4} \leq F_{n - 2} \leq \Phi^{n - 3} . 
$$
By adding corresponding sides of the two last double inequalities and by taking account that: $\Phi^{n - 3} + \Phi^{n - 4} = \Phi^{n - 4} (\Phi + 1) = \Phi^{n - 4} \Phi^2 = \Phi^{n - 2}$ (since $\Phi + 1 = \Phi^2$), $\Phi^{n - 2} + \Phi^{n - 3} = \Phi^{n - 1}$ (for the same reason) and $F_{n - 1} + F_{n - 2} = F_n$, we get:
$$
\Phi^{n - 2} \leq F_n \leq \Phi^{n - 1} ,
$$
which is the double inequality of the lemma for the integer $n$. This achieves this induction and confirms the validity of the double inequality of the lemma for any positive integer $n$. We can show the second part of the lemma by the same way.
\end{proof}
\begin{lemma}\label{l3}
For any integer $a \geq 2$, the real number $\frac{\log a}{\log\Phi}$ is irrational.
\end{lemma}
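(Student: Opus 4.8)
The plan is to argue by contradiction, exploiting the irrationality of $\Phi$ together with identity \eqref{x^n in function of x and f_n}. Assume, for contradiction, that $\frac{\log a}{\log\Phi}$ is rational. Since $a \geq 2$ and $\Phi > 1$, both $\log a$ and $\log\Phi$ are positive, so this ratio is a \emph{positive} rational number; write it as $\frac{p}{q}$ with $p, q$ positive integers. Then $q \log a = p \log\Phi$, and exponentiating gives $a^q = \Phi^p$. The point of taking $p, q \geq 1$ is that it makes the exponents meaningful and, crucially, guarantees $F_p \geq 1 \neq 0$ below; note also that $a^q$ is here a genuine integer, which is where the hypothesis $a \in \Z$, $a \geq 2$ enters (the statement fails for $a = 1$).

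Next I would recall that $\Phi$ is irrational: it is a root of $x^2 - x - 1 = 0$, whose only possible rational roots are $\pm 1$ by the rational root theorem, and neither satisfies the equation. Now apply \eqref{x^n in function of x and f_n} with $x = \Phi$ and $n = p$, namely $\Phi^p = F_p \Phi + F_{p-1}$. Combining this with $\Phi^p = a^q \in \Z$ and $F_p \geq 1$, we may solve for $\Phi$ to obtain $\Phi = \frac{a^q - F_{p-1}}{F_p} \in \Q$, contradicting the irrationality of $\Phi$. Hence $\frac{\log a}{\log\Phi}$ is irrational, which is the claim.

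There is no real obstacle here; the proof is short and the only points requiring a little care are the reduction to strictly positive exponents $p, q$ and the observation that $a^q$ is an integer. An alternative route, which avoids \eqref{x^n in function of x and f_n}, is to apply the conjugation $\sqrt{5} \mapsto -\sqrt{5}$ to the relation $a^q = \Phi^p$, which yields $a^q = \overline{\Phi}^p$ as well, hence $\Phi^p = \overline{\Phi}^p$; but for $p \geq 1$ one has $\Phi^p \geq \Phi > 1 > \Phi^{-p} = |\overline{\Phi}^p|$, a contradiction. I would present the first version, since it reuses machinery already set up in the excerpt.
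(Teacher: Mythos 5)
Your proof is correct, but the contradiction is reached by a genuinely different mechanism than the paper's. You reduce, as the paper does, to a relation $\Phi^p = a^q \in \Z$ with $p, q \geq 1$, but then you invoke the expansion $\Phi^p = F_p \Phi + F_{p-1}$ from \eqref{x^n in function of x and f_n} to solve for $\Phi = \frac{a^q - F_{p-1}}{F_p}$ and contradict the irrationality of $\Phi$, which you must (and do) establish separately via the rational root theorem. The paper never needs the irrationality of $\Phi$: from $\Phi^r \in \Z$ it passes to the conjugate via \eqref{closed formula for l_n}, writing $\overline{\Phi}^r = L_r - \Phi^r \in \Z$, and contradicts this with the size estimate $\left\vert\overline{\Phi}\right\vert^r \in (0,1)$, so the only analytic input is $\left\vert\overline{\Phi}\right\vert < 1$. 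Your sketched alternative (applying the conjugation $\sqrt{5} \mapsto -\sqrt{5}$ to $a^q = \Phi^p$ and comparing magnitudes) is essentially the paper's argument in Galois-theoretic form. Both routes are short and use only identities already set up in the introduction; the paper's buys independence from any prior irrationality statement, while yours reuses \eqref{x^n in function of x and f_n} at the modest cost of the (easy) extra fact that $\Phi \notin \Q$.
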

\begin{proof}
Let $a \geq 2$ be an integer. We argue by contradiction. Suppose that $\frac{\log a}{\log\Phi}$ is rational. Since $\frac{\log a}{\log\Phi} > 0$, we can write $\frac{\log a}{\log\Phi} = \frac{r}{s}$, where $r$ and $s$ are positive integers. This gives $\Phi^r = a^s$ and shows that $\Phi^r \in \Z$. Then, since $\overline{\Phi}^r = L_r - \Phi^r$ (according to \eqref{closed formula for l_n}), it follows that $\overline{\Phi}^r \in \Z$. But on the other hand, we have $\left\vert\overline{\Phi}^r\right\vert = {\left\vert\overline{\Phi}\right\vert}^r \in (0 , 1)$ (since $\left\vert\overline{\Phi}\right\vert \in (0 , 1)$). We thus have a contradiction which confirms that the real number $\frac{\log a}{\log \Phi}$ is irrational.
\end{proof}
\begin{lemma}\label{l4}
When the positive real number $x$ tends to infinity, then we have:
$$
\card{\left(\mathscr{F} \cap [1 , x)\right)} ~\sim~ \frac{\log{x}}{\log\Phi} .
$$
\end{lemma}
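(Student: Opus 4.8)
The plan is to count exactly the Fibonacci numbers lying in $[1,x)$ in terms of a single index, and then to control that index by means of Lemma \ref{l1}. First I would observe that the nonzero Fibonacci numbers, listed as a set, are $F_2 = 1 < F_3 < F_4 < \cdots$, the sequence ${(F_n)}_{n \geq 2}$ being strictly increasing. Consequently, for every real number $x > 1$ there is a unique integer $n = n(x) \geq 2$ such that $F_n < x \leq F_{n + 1}$, and for this $n$ one has $\F \cap [1 , x) = \{F_2 , F_3 , \dots , F_n\}$; hence $\card{\left(\F \cap [1 , x)\right)} = n - 1$.

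Next I would locate $n(x)$ in terms of $x$. Applying Lemma \ref{l1} to $F_n$ and to $F_{n + 1}$, the inequalities $\Phi^{n - 2} \leq F_n$ and $F_{n + 1} \leq \Phi^n$ combine with $F_n < x \leq F_{n + 1}$ to yield $\Phi^{n - 2} < x \leq \Phi^n$, so that $n - 2 < \frac{\log x}{\log\Phi} \leq n$. In particular $\left\vert n - \frac{\log x}{\log\Phi}\right\vert < 2$, and therefore $\card{\left(\F \cap [1 , x)\right)} = n - 1 = \frac{\log x}{\log\Phi} + \theta(x)$ for some quantity $\theta(x)$ with $\vert\theta(x)\vert < 3$, i.e.\ bounded independently of $x$.

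Finally I would divide through by $\frac{\log x}{\log\Phi}$ and let $x \to +\infty$: since $\frac{\log x}{\log\Phi} \to +\infty$ while $\theta(x)$ remains bounded, the ratio $\frac{\card{\left(\F \cap [1 , x)\right)}}{\log x / \log\Phi} = 1 + \frac{\theta(x)\log\Phi}{\log x}$ tends to $1$, which is precisely the asserted asymptotic equivalence. I do not anticipate any serious obstacle here; the only delicate point is the off-by-one bookkeeping caused by the repeated value $F_1 = F_2 = 1$ and by the choice of which interval endpoints are included, and both of these are harmless in the limit. (One could alternatively derive the estimate $n = \frac{\log x}{\log\Phi} + O(1)$ directly from Binet's formula \eqref{binet}, but invoking Lemma \ref{l1} keeps the argument entirely elementary.)
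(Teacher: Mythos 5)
Your proof is correct, and its skeleton is the same as the paper's: bracket $x$ between two consecutive Fibonacci numbers, identify $\card{(\F \cap [1,x))}$ with the resulting index (your $n-1$ is the paper's $h$), and then translate the bracketing into an estimate for that index. Where you diverge is in the final ingredient: the paper takes logarithms of $F_{h+1} < x \leq F_{h+2}$ and invokes Binet's formula \eqref{binet} to get $\lim_{h\to\infty}\frac{\log F_{h+1}}{h\log\Phi}=\lim_{h\to\infty}\frac{\log F_{h+2}}{h\log\Phi}=1$, concluding by a squeeze (which tacitly uses that $h\to\infty$ as $x\to\infty$); you instead feed the bracketing into Lemma \ref{l1} to get $\Phi^{n-2} < x \leq \Phi^{n}$, hence $n-2 < \frac{\log x}{\log\Phi} \leq n$ and the explicit estimate $\card{(\F\cap[1,x))} = \frac{\log x}{\log\Phi} + O(1)$ (in fact your error $\theta(x)$ lies in $[-1,1)$, so your bound $|\theta(x)|<3$ is generous but valid). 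This buys you two small advantages: the argument stays entirely elementary, resting only on the induction in Lemma \ref{l1} rather than on the closed-form \eqref{binet}, and it yields a quantitative bounded-error statement that is strictly stronger than the asymptotic equivalence required — it would, for instance, plug into the density computation in the proof of Theorem \ref{thm_final} with an explicit $O(1)$ in place of $o(N)$ per interval. The paper's limit argument is marginally shorter to state but gives only the $\sim$ relation. Your off-by-one bookkeeping ($F_1=F_2=1$ counted once, endpoints $F_n < x \leq F_{n+1}$) is handled correctly and matches the paper's convention.
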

\begin{proof}
For a given $x > 1$, let $\F \cap [1 , x) = \{F_2 , F_3 , \dots , F_{h + 1}\}$ for some positive integer $h$. So we have $\card{(\F \cap [1 , x))} = h$ and: 
$$
F_{h + 1} < x \leq F_{h + 2} ,
$$
which gives:
$$
\frac{\log{F_{h + 1}}}{h \log\Phi} < \frac{\log{x}}{h \log\Phi} \leq \frac{\log{F_{h + 2}}}{h \log\Phi} .
$$
But because we have (according to the Binet formula \eqref{binet}):
$$
\lim_{h \rightarrow + \infty} \frac{\log{F_{h + 1}}}{h \log\Phi} = \lim_{h \rightarrow + \infty} \frac{\log{F_{h + 2}}}{h \log\Phi} = 1 ,
$$
it follows that $\displaystyle \lim_{h \rightarrow + \infty} \frac{\log{x}}{h \log\Phi} = 1$. Hence $h \sim_{+ \infty} \frac{\log{x}}{\log\Phi}$, as required.
\end{proof}
\begin{lemma}\label{l2}
For any $n \in \Z$, we have:
$$
F_{2 n - 1} \geq F_{n}^{2} .
$$
\end{lemma}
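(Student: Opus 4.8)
The plan is to derive the slightly stronger exact identity $F_{2n-1} = F_n^2 + F_{n-1}^2$ and then simply drop a nonnegative term. Concretely, I would invoke the addition formula \eqref{addition formula}, which is valid for all $n, m \in \Z$, and specialize it to $m = n - 1$. This yields
$$
F_{2n-1} = F_{n + (n-1)} = F_n F_{(n-1)+1} + F_{n-1} F_{n-1} = F_n^2 + F_{n-1}^2 .
$$
Since $F_{n-1}$ is an integer (recall the Fibonacci sequence is defined on all of $\Z$ via \eqref{extend f_n}), we have $F_{n-1}^2 \geq 0$, and therefore $F_{2n-1} = F_n^2 + F_{n-1}^2 \geq F_n^2$, which is exactly the claimed inequality. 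Because the addition formula holds for every $n \in \Z$, the argument is uniform and requires no separate discussion of negative indices.

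The only point that deserves a word of care is making sure the chosen specialization of \eqref{addition formula} is the correct one: we want the index sum $n + m$ to equal $2n - 1$, which forces $m = n - 1$, and then the term $F_{m+1}$ becomes $F_n$, collapsing the right-hand side into the sum of two squares. If one preferred to avoid citing the addition formula, the identity $F_{2n-1} = F_n^2 + F_{n-1}^2$ could alternatively be established directly from the Binet formula \eqref{binet} (expanding $\Phi^{2n-1} - \overline{\Phi}^{2n-1}$ and using $\Phi\overline{\Phi} = -1$) or by a routine induction on $n \geq 1$ together with \eqref{extend f_n} to pass to $n \leq 0$; but going through \eqref{addition formula} is the shortest route.

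I do not expect any genuine obstacle here: the lemma is an immediate corollary of a standard identity, and the ``hard part'' is merely bookkeeping, namely recognizing that $m = n-1$ is the substitution that produces a perfect-square decomposition. The inequality is in fact strict unless $F_{n-1} = 0$, i.e. unless $n \in \{0, 1\}$, though the statement as given only needs the non-strict form, so I would not belabor this.
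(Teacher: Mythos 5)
Your proof is correct and follows exactly the paper's own argument: specialize the addition formula \eqref{addition formula} to $m = n-1$ to obtain $F_{2n-1} = F_n^2 + F_{n-1}^2$ and then drop the nonnegative term $F_{n-1}^2$. No issues.
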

\begin{proof}
Let $n \in \Z$. According to the addition formula \eqref{addition formula}, we have:
$$
F_{2 n - 1} = F_{n + (n - 1)} = F_n^2 + F_{n - 1}^{2} \geq F_n^2 .
$$
The lemma is proved.
\end{proof}
\begin{lemma}\label{Formule_addition_F_L}
For all $n , m \in \Z$, we have:
$$
F_{n + m} = F_n L_m + (-1)^{m + 1} F_{n - m} .
$$
\end{lemma}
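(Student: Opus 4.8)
The plan is to reduce the identity to the addition formula \eqref{addition formula} already in hand, handling the negative index $n-m$ by means of the extension rule \eqref{extend f_n} and converting the Lucas factor via \eqref{l_n in terms of f_n}. Since all three of these ingredients are valid for arbitrary integers, the resulting identity will automatically hold for all $n,m \in \Z$, with no separate case analysis on the signs of the indices.

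Concretely, I would first rewrite the Lucas factor using $L_m = F_{m-1} + F_{m+1}$ from \eqref{l_n in terms of f_n}, so that $F_n L_m = F_n F_{m-1} + F_n F_{m+1}$. Next I would expand the cross term $F_{n-m} = F_{n+(-m)}$ by applying the addition formula \eqref{addition formula} with second argument $-m$, which gives $F_{n-m} = F_n F_{-m+1} + F_{n-1} F_{-m}$. Feeding in the extension values $F_{-m} = (-1)^{m+1}F_m$ and $F_{-m+1} = F_{-(m-1)} = (-1)^{m}F_{m-1}$ from \eqref{extend f_n}, and factoring out $(-1)^m$, I obtain the clean expression $F_{n-m} = (-1)^m\bigl(F_n F_{m-1} - F_{n-1}F_m\bigr)$. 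Multiplying by $(-1)^{m+1}$ collapses the sign to a single minus, so that $(-1)^{m+1}F_{n-m} = -F_n F_{m-1} + F_{n-1}F_m$.

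Adding this last expression to $F_n L_m = F_n F_{m-1} + F_n F_{m+1}$, the two $F_n F_{m-1}$ terms cancel and what remains is $F_n F_{m+1} + F_{n-1} F_m$, which is exactly $F_{n+m}$ by the addition formula \eqref{addition formula}. That closes the argument.

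An equally short alternative is to substitute the closed forms \eqref{binet} and \eqref{closed formula for l_n} directly: expanding $F_n L_m = \frac{1}{\sqrt{5}}\left(\Phi^n - \overline{\Phi}^n\right)\left(\Phi^m + \overline{\Phi}^m\right)$ produces the two wanted terms $\Phi^{n+m}$ and $-\overline{\Phi}^{n+m}$ together with the cross terms $\Phi^n\overline{\Phi}^m - \overline{\Phi}^n\Phi^m$, and using $\Phi\overline{\Phi} = -1$ one recognizes the cross terms as $(-1)^m\left(\Phi^{n-m} - \overline{\Phi}^{n-m}\right) = (-1)^m\sqrt{5}\,F_{n-m}$. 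I expect no genuine obstacle in either route; the only point demanding care is the bookkeeping of the sign arising from the negative index, namely the factor $(-1)^m$ produced either by the extension rule \eqref{extend f_n} or, equivalently, by $(\Phi\overline{\Phi})^m = (-1)^m$, and the verification that multiplication by $(-1)^{m+1}$ turns it into the single minus sign that forces the desired cancellation.
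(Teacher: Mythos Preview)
Your argument is correct and follows essentially the same route as the paper: both proofs expand $F_{n-m}$ via the addition formula \eqref{addition formula} with second argument $-m$, convert the negative-index Fibonacci numbers using \eqref{extend f_n}, and invoke $L_m = F_{m-1}+F_{m+1}$ from \eqref{l_n in terms of f_n}; the only cosmetic difference is that the paper adds the expressions for $F_{n+m}$ and $(-1)^m F_{n-m}$ to produce $F_n L_m$, whereas you start from $F_n L_m + (-1)^{m+1}F_{n-m}$ and collapse it to $F_{n+m}$. Your Binet-formula alternative is also valid and equally short.
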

\begin{proof}
Let $n , m \in \Z$. According to the addition formula \eqref{addition formula}, we have:
\begin{equation}\label{eq_app_a}
F_{n + m} = F_n F_{m + 1} + F_{n - 1} F_m
\end{equation}
and
$$
F_{n - m} = F_n F_{- m + 1} + F_{n - 1} F_{- m} = (-1)^m F_n F_{m - 1} + (-1)^{m + 1} F_{n - 1} F_m 
$$
(according to \eqref{extend f_n}). Hence:
\begin{equation}\label{eq_app_b}
(-1)^m F_{n - m} = F_n F_{m - 1} - F_{n - 1} F_m
\end{equation}
By adding corresponding sides of \eqref{eq_app_a} and \eqref{eq_app_b}, we get:
$$
F_{n + m} + (-1)^m F_{n - m} = F_n \left(F_{m + 1} + F_{m - 1}\right) = F_n L_m 
$$
(according to \eqref{l_n in terms of f_n}). Hence:
$$
F_{n + m} = F_n L_m + (-1)^{m + 1} F_{n - m} ,
$$
as required.
\end{proof}
\begin{lemma}[the key lemma]\label{lemme clef}
For all $n , m \in \N$, satisfying $(n , m) \neq (0 , 1)$ and $n \geq m - 1$, we have:
$$
F_n \left\lfloor \Phi^m\right\rfloor \leq F_{n + m} \leq F_n \left\lceil \Phi^m\right\rceil .
$$
\end{lemma}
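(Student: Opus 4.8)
The plan is to compute $\lfloor\Phi^m\rfloor$ and $\lceil\Phi^m\rceil$ in closed form via the Lucas numbers and then feed the result into the addition formula of Lemma~\ref{Formule_addition_F_L}; after this, the whole statement collapses to the elementary bounds $0\le F_{n-m}\le F_n$.

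First I would handle the degenerate case $m=0$ separately: there $\lfloor\Phi^0\rfloor=\lceil\Phi^0\rceil=1$ and $F_{n+0}=F_n$, so both inequalities are actually equalities (and the hypotheses impose nothing). Assume now $m\ge 1$. Since $\overline{\Phi}=-1/\Phi$, formula~\eqref{closed formula for l_n} gives $\Phi^m=L_m-\overline{\Phi}^m=L_m-(-1)^m\Phi^{-m}$, with $0<\Phi^{-m}<1$ because $\Phi>1$. Hence, for $m$ even, $L_m-1<\Phi^m<L_m$, so $\lfloor\Phi^m\rfloor=L_m-1$ and $\lceil\Phi^m\rceil=L_m$; and for $m$ odd, $L_m<\Phi^m<L_m+1$, so $\lfloor\Phi^m\rfloor=L_m$ and $\lceil\Phi^m\rceil=L_m+1$. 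Setting $\varepsilon:=(-1)^{m+1}$, in both cases the asserted double inequality, after subtracting $F_nL_m$ throughout (and, for $m$ even, multiplying by $-1$), becomes equivalent to
$$
0\ \le\ \varepsilon\bigl(F_{n+m}-F_nL_m\bigr)\ \le\ F_n .
$$
Now Lemma~\ref{Formule_addition_F_L} states precisely that $F_{n+m}-F_nL_m=(-1)^{m+1}F_{n-m}=\varepsilon F_{n-m}$, so $\varepsilon\bigl(F_{n+m}-F_nL_m\bigr)=F_{n-m}$, and the whole claim reduces to
$$
0\ \le\ F_{n-m}\ \le\ F_n .
$$

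It then remains to verify these two inequalities under the hypotheses $n\ge m-1$ and $(n,m)\ne(0,1)$. From $n\ge m-1$ we get $n-m\ge -1$; since $F_{-1}=1$, $F_0=0$ and the Fibonacci sequence is nondecreasing on nonnegative indices, $F_{n-m}\ge 0$ in every case. For the upper bound, if $n-m\ge 0$ then $F_{n-m}\le F_n$ by monotonicity (as $n-m\le n$); the only remaining possibility is $n-m=-1$, i.e.\ $n=m-1$, where $F_{n-m}=1$ and one needs $F_n\ge 1$, i.e.\ $n\ge 1$ — which holds since $n=m-1=0$ is exactly the excluded pair $(n,m)=(0,1)$. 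The only delicate point in the argument is this boundary index $n-m=-1$: it is precisely there that both hypotheses are used, and the statement genuinely fails at $(0,1)$, where it would read $0\le F_1\le 0$.
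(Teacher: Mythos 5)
Your proof is correct and follows essentially the same route as the paper: identify $\lfloor\Phi^m\rfloor$ and $\lceil\Phi^m\rceil$ as $L_m-1$, $L_m$ or $L_m$, $L_m+1$ according to the parity of $m$, then use Lemma~\ref{Formule_addition_F_L} to reduce everything to $0\le F_{n-m}\le F_n$, with the hypotheses $n\ge m-1$ and $(n,m)\ne(0,1)$ handling the boundary index $n-m=-1$. The only cosmetic difference is that you merge the two parity cases via the sign $\varepsilon=(-1)^{m+1}$, where the paper treats them separately.
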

\begin{proof}
The double inequality of the lemma is trivial for $m = 0$. For what follows, assume that $m \geq 1$. We distinguish two cases according to the parity of $m$. \\[1mm]
\underline{1\textsuperscript{st} case:} (if $m$ is even). In this case, we have $\overline{\Phi}^m \in (0 , 1)$ (since $\overline{\Phi} \in (-1 , 0)$ and $m$ is even). Using \eqref{closed formula for l_n}, it follows that:
$$
\Phi^m = L_m - \overline{\Phi}^m \in (L_m - 1 , L_m) .
$$
Consequently, we have:
$$
\left\lfloor \Phi^m\right\rfloor = L_m - 1 ~~~~\text{and}~~~~ \left\lceil \Phi^m\right\rceil = L_m .
$$
So, for this case, we have to show that:
$$
F_n \left(L_m - 1\right) \leq F_{n + m} \leq F_n L_m .
$$
Let us show the last double inequality. According to Lemma \ref{Formule_addition_F_L}, we have:
\begin{equation}\label{eq_split}
\begin{split}
F_{n + m} &= F_n L_m + (-1)^{m + 1} F_{n - m} \\
&= F_n L_m - F_{n - m} ~~~~~~~~~~ (\text{because } m \text{ is even}) \\
&= F_n \left(L_m - 1\right) + \left(F_n - F_{n - m}\right)
\end{split}
\end{equation}
Next, since $n - m \geq -1$ (because $n \geq m - 1$ by hypothesis), then we have:
$$
F_{n - m} \geq 0 
$$
and since $n \geq n - m \geq -1$ and $(n , n - m) \neq (0 , -1)$ (because $(n , m) \neq (0 , 1)$ by hypothesis), then we have $F_n \geq F_{n - m}$; that is:
$$
F_n - F_{n - m} \geq 0 .
$$
Therefore, the second and the third equalities of \eqref{eq_split} show that:
$$
F_n \left(L_m - 1\right) \leq F_{n + m} \leq F_n L_m ,
$$
as required. \\[1mm]
\underline{2\textsuperscript{nd} case:} (if $m$ is odd). In this case, we have $\overline{\Phi}^m \in (-1 , 0)$ (because $\overline{\Phi} \in (-1 , 0)$ and $m$ is odd). Using \eqref{closed formula for l_n}, it follows that:
$$
\Phi^m = L_m - \overline{\Phi}^m \in (L_m , L_m + 1) .
$$
Consequently, we have:
$$
\left\lfloor \Phi^m\right\rfloor = L_m ~~~~\text{and}~~~~ \left\lceil \Phi^m\right\rceil = L_{m} + 1 .
$$
So, for this case, we have to show that:
$$
F_n L_m \leq F_{n + m} \leq F_n \left(L_m + 1\right) .
$$
Let us show this last double inequality. According to Lemma \ref{Formule_addition_F_L}, we have:
\begin{equation}\label{eq_split_2}
\begin{split}
F_{n + m} &= F_n L_m + (-1)^{m + 1} F_{n - m} \\
&= F_n L_m + F_{n - m} ~~~~~~~~~~ (\text{because } m \text{ is odd}) \\
&= F_n \left(L_m + 1\right) - \left(F_n - F_{n - m}\right)
\end{split}
\end{equation}
For the same reasons as in the first case, we have:
$$
F_{n - m} \geq 0 ~~~~\text{and}~~~~ F_n - F_{n - m} \geq 0 .
$$
It follows, according to the second and the third equalities of \eqref{eq_split_2}, that:
$$
F_n L_m \leq F_{n + m} \leq F_n \left(L_m + 1\right) ,
$$
as required. This completes the proof of the lemma.
\end{proof}
We are now ready to prove our main result.
\begin{proof}[Proof of Theorem \ref{thm_final}]
Let $a \geq 2$ be a fixed integer. For simplification, we put for any natural number $k$: $I_k := [a^k , a^{k + 1})$ and we put $\ell := \left\lfloor \frac{\log a}{\log\Phi}\right\rfloor$. Since the real number $\frac{\log a}{\log\Phi}$ is not an integer (according to Lemma \ref{l3}), we deduce that $\ell + 1 = \left\lceil \frac{\log a}{\log\Phi}\right\rceil$. \\
\textbullet{} First, let us show the first part of the theorem. \\
--- For $k = 0$, we have $I_k = I_0 = [1 , a)$. According to the definition of $\ell$, we have:
$$
\Phi^{\ell} \leq a < \Phi^{\ell + 1} .
$$
Hence:
\begin{equation}\label{double_inclusion}
\bigsqcup_{i = 0}^{\ell - 1} \left[\Phi^i , \Phi^{i + 1}\right) \subset I_0 \subset \bigsqcup_{i = 0}^{\ell} \left[\Phi^i , \Phi^{i + 1} \right) 
\end{equation}
(recall that the symbol $\sqcup$ denotes a disjoint union). Since, according to Lemma \ref{l1}, each interval $[\Phi^i , \Phi^{i + 1})$ ($i \in \N$) 
contains a unique Fibonacci number, it follows from \eqref{double_inclusion} that the interval $I_0$ contains at least $\ell$ Fibonacci numbers and at most $(\ell + 1)$ Fibonacci numbers, as required. \\[1mm]
--- For the following, we assume $k \geq 1$. Let $i$ denote the number of the Fibonacci numbers belonging to $I_k$ and let $F_r , F_{r + 1} , \dots , F_{r + i - 1}$ ($r \geq 2$) denote those Fibonacci numbers. We shall determine $i$. We have by definition:
\begin{equation}\label{eq_une}
F_{r - 1} < a^k \leq F_r < F_{r + 1} < \dots < F_{r + i - 1} < a^{k + 1} \leq F_{r + i}
\end{equation}
which implies that:
\begin{align}
\frac{F_{r + i - 1}}{F_r} < \frac{a^{k + 1}}{a^k} = a \label{eq_deux} \\[-5mm]
\intertext{and}
\frac{F_{r + i}}{F_{r - 1}} > \frac{a^{k + 1}}{a^k} = a \label{eq_trois}
\end{align}
On the other hand, we have:
\begin{eqnarray*}
F_{r + i - 1} & < & a^{k + 1} ~~~~~~~~~~ (\text{according to \eqref{eq_une}}) \\
& \leq & a^{2 k} ~~~~~~~~~~ (\text{since $k \geq 1$}) \\
& \leq & F_r^2 ~~~~~~~~~~ (\text{according to \eqref{eq_une}}) \\
& \leq & F_{2 r - 1} ~~~~~~~~~~ (\text{according to Lemma \ref{l2}}) .   
\end{eqnarray*}
Hence $F_{r + i - 1} < F_{2 r - 1}$. Because the sequence ${(F_n)}_{n \in \N}$ is non-decreasing, we deduce that \linebreak $r + i - 1 < 2 r - 1$, which gives $r > i$; that is $r \geq i + 1$. This then allows us to apply Lemma \ref{lemme clef} for each of the two couples $(n , m) = (r , i - 1)$ and $(n , m) = (r - 1 , i + 1)$ to obtain:
\begin{align*}
\left\lfloor \Phi^{i - 1}\right\rfloor &~\leq~ \displaystyle\frac{F_{r + i - 1}}{F_r} ~\leq~ \left\lceil \Phi^{i - 1}\right\rceil \\
\intertext{and} 
\left\lfloor \Phi^{i + 1}\right\rfloor &~\leq~ \displaystyle\frac{F_{r + i}}{F_{r - 1}} ~\leq~ \left\lceil \Phi^{i + 1}\right\rceil
\end{align*}
By comparing these last double inequalities with \eqref{eq_deux} and \eqref{eq_trois}, we deduce that:
$$
\left\lfloor \Phi^{i - 1}\right\rfloor < a < \left\lceil \Phi^{i + 1}\right\rceil .
$$
But since $a$ is an integer, it follows that:
$$
\Phi^{i - 1} < a < \Phi^{i + 1} ,
$$
which gives:
$$
\frac{\log a}{\log\Phi} - 1 < i < \frac{\log a}{\log\Phi} + 1 .
$$
Finally, since $i$ is an integer, we conclude that:
$$
i \in \left\{\left\lfloor \frac{\log a}{\log\Phi}\right\rfloor ~,~ \left\lceil \frac{\log a}{\log\Phi}\right\rceil\right\} = \left\{\ell , \ell + 1\right\},
$$
as required. \\[1mm]
\textbullet{} Now, let us show the second part of the theorem which deal with densities of subsets of the natural numbers. For a given positive integer $N$, the intervals $I_k := [a^k , a^{k + 1})$ ($0 \leq k \leq N - 1$) clearly form a partition of the interval $[1 , a^N)$. Let $A_N$ denote the number of the intervals $I_k$ ($0 \leq k \leq N - 1$) containing, each of them, exactly $\ell$ Fibonacci numbers and let $B_N$ denote the number of the intervals $I_k$ ($0 \leq k \leq N - 1$) containing, each of them, exactly $(\ell + 1)$ Fibonacci numbers. According to the first part of the theorem (shown above), we have: 
$$
A_N + B_N = N ~~~~~~\text{and}~~~~~~ \ell A_N + (\ell + 1) B_N ~=~ \card\left(\mathscr{F} \cap [1 , a^N)\right) ~\sim_{+ \infty}~ \frac{\log(a^N)}{\log{\Phi}} ~=~ \frac{\log a}{\log \Phi} N 
$$
(where the last estimate follows from Lemma \ref{l4}). So, the couple $(A_N , B_N)$ is a solution of the following linear system of two equations:
$$
\left\{
\begin{array}{lcl}
A_N + B_N & = & N \\[2mm]
\ell A_N + (\ell + 1) B_N & = & \displaystyle\frac{\log a}{\log \Phi} N + o(N)
\end{array}
\right. .
$$
By solving this system, we get:
\begin{multline*}
A_N = \left(\ell + 1 - \frac{\log a}{\log \Phi}\right) N + o(N) = \left(\left\lfloor \frac{\log a}{\log\Phi}\right\rfloor + 1 - \frac{\log a}{\log \Phi}\right) N + o(N) \\
= \left(1 - \left\langle\frac{\log a}{\log \Phi}\right\rangle\right) N + o(N)
\end{multline*}
and
$$
B_N = \left(\frac{\log a}{\log \Phi} - \ell\right) N + o(N) = \left(\frac{\log a}{\log \Phi} - \left\lfloor\frac{\log a}{\log\Phi}\right\rfloor\right) N + o(N) = \left\langle \frac{\log a}{\log\Phi}\right\rangle N + o(N) .
$$
Hence:
$$
\lim_{N \rightarrow + \infty} \frac{A_N}{N} = 1 - \left\langle\frac{\log a}{\log\Phi}\right\rangle ~~~~\text{and}~~~~ \lim_{N \rightarrow + \infty} \frac{B_N}{N} = \left\langle\frac{\log a}{\log \Phi}\right\rangle .
$$
These two limits respectively represent the density of the set of the all $k \in \N$ for which the interval $I_k$ contains exactly $\ell = \left\lfloor\frac{\log a}{\log\Phi}\right\rfloor$ Fibonacci numbers and the density of the set of the all $k \in \N$ for which $I_k$ contains exactly $(\ell + 1) = \left\lceil\frac{\log a}{\log\Phi}\right\rceil$ Fibonacci numbers. This confirms the second part of the theorem and completes this proof.
\end{proof}

\section{Numerical examples and remarks}
In this section, we apply our main result for some particular values of $a$ to deduce some interesting results.
\begin{itemize}
\item For $a = 10$, the first part of Theorem \ref{thm_final} shows that any interval of the form $[10^k , 10^{k + 1})$ ($k \in \N$) contains either $\left\lfloor \frac{\log{10}}{\log\Phi}\right\rfloor = 4$ or $\left\lceil \frac{\log{10}}{\log\Phi}\right\rceil = 5$ Fibonacci numbers. We thus find again Lam\'e's result cited in Section 1. The second part of Theorem \ref{thm_final} shows that the set of the all positive integers $k$ for which there are exactly 4 Fibonacci numbers with $k$ digits and the set of the all positive integers $k$ for which there are exactly 5 Fibonacci numbers with $k$ digits have respectively the densities $1 - \left\langle \frac{\log{10}}{\log\Phi}\right\rangle = 0.215\dots$ and $\left\langle \frac{\log{10}}{\log\Phi}\right\rangle = 0.784\dots$.
\item For $a = 7$, Theorem \ref{thm_final} shows that any interval of the form $[7^k , 7^{k + 1})$ ($k \in \N$) contains either $4$ or $5$ Fibonacci numbers. Besides, the density of the set of the all $k \in \N$ for which the interval $[7^k , 7^{k + 1})$ contains exactly $4$ Fibonacci numbers is  about $0.956$. So, there is more than a $95 \%$ chance that an arbitrary interval of the form $[7^k , 7^{k + 1})$ contains exactly $4$ Fibonacci numbers.    
\end{itemize}

\medskip

\noindent\textbf{Remark 1.} The second example above shows that for $a = 7$, the intervals $[a^k , a^{k + 1})$ ($k \in \N$) contain almost all (i.e., with a large percentage) the same quantity of Fibonacci numbers. Interestingly, we remark that $7 = L_4$ is a Lucas number. Actually, it is not difficult to show that the last property is satisfied for any other Lucas number $a \geq 7$. Indeed, if $a$ is a Lucas number (say $a = L_n$ for some $n \geq 4$) then $a$ is close to $\Phi^n$ (since $L_n = \Phi^n + \overline{\Phi}^n$, according to \eqref{closed formula for l_n}) and then $\frac{\log{a}}{\log\Phi}$ is close to $n$. It follows that one of the two densities occurring in Theorem \ref{thm_final} (that is $\langle\frac{\log{a}}{\log\Phi}\rangle$ and $1 - \langle\frac{\log{a}}{\log\Phi}\rangle$) is almost zero, which confirms that the intervals $[a^k , a^{k + 1})$ ($k \in \N$) contain almost all a same quantity of Fibonacci numbers. Taking for example $a = L_{11} = 199$, we find that more than $99.99 \%$ of the intervals $[199^k , 199^{k + 1})$ ($k \in \N$) contain exactly $11$ Fibonacci numbers. The few remaining of these intervals contain only $10$ Fibonacci numbers.

\medskip

\noindent\textbf{Remark 2.} Let $a \geq 2$ be a fixed integer. For an arbitrary natural number $k$, let $X$ denote the random variable that counts the number of the Fibonacci numbers belonging to the interval $[a^k , a^{k + 1})$. Theorem \ref{thm_final} shows that $X$ takes only two possible values $\lfloor\frac{\log{a}}{\log\Phi}\rfloor$ and $\lceil\frac{\log{a}}{\log\Phi}\rceil$ with the probabilities $(1 - \langle \frac{\log a}{\log\Phi}\rangle)$ and $\langle\frac{\log a}{\log\Phi}\rangle$, respectively. Using this, the calculations give:
\begin{eqnarray*}
\mathbb{E}(X) & = & \frac{\log{a}}{\log\Phi} \\[1mm]
\sigma(X) & = & \sqrt{\left\langle\frac{\log{a}}{\log\Phi}\right\rangle \left(1 - \left\langle\frac{\log{a}}{\log\Phi}\right\rangle\right)} ,
\end{eqnarray*} 
where $\mathbb{E}(X)$ and $\sigma(X)$ respectively denote the mathematical expectation and the standard deviation of $X$.

\end{document}